\theoremstyle{plain}
\newtheorem{lem}{Lemma}
\renewcommand{\[}{\begin{eqnarray*}}
\renewcommand{\]}{\end{eqnarray*}}
\newcommand{\la}{\begin{eqnarray}}
\newcommand{\al}{\end{eqnarray}}
\renewcommand{\epsilon}{\varepsilon}
\renewcommand{\phi}{\varphi}
\newcommand{\N}{{\mathbb N}}
\newcommand{\R}{{\mathbb R}}
\renewcommand{\P}{{\mathbb P}}\newcommand{\E}{{\mathbb E}}  
\begin{document}

\begin{frontmatter}


\title{Sums of norm spheres are norm shells  and lower triangle 
inequalities are sharp}

\runtitle{ Sums of norm spheres} 

\begin{aug}
\author{\fnms{Lutz} \snm{Mattner}
\ead[label=e1]{mattner@uni-trier.de}}

\runauthor{Lutz Mattner}

\affiliation{Universit\"at Trier} 
 {\rm \today}\\
 {\footnotesize\tt \jobname.tex}

\address{Universit\"at Trier\\
Fachbereich IV -  Mathematik\\ 
54286 Trier \\
Germany\\
\printead{e1}}
\end{aug}

 \begin{abstract}
 The statements in the title are explained and proved, as a little
 exercise in elementary normed vector space theory
 at the level of Chapter 5 of Dieudonn\'e's 
 \textit{Foundations of Mathematical Analysis}.
 A connection to recent moment bounds for submartingales is sketched. 
 \end{abstract}

\begin{keyword}[class=AMS]
\kwd[Primary ]{46B20}
\kwd[; secondary ]{26D99} 
\kwd{60E15}.
\end{keyword}
\begin{keyword}
\kwd{Normed vector space}
\kwd{moment inequalities}.
\end{keyword}

\end{frontmatter}



\section{A stochastic introduction: 
From moment bounds for sub\-mar\-tin\-gales 
to elementary normed space theory}
This note treats from Section~\ref{Sec.Old.Intro} onwards an elementary
and natural but seemingly not too well-known exercise in analysis,
suitable for mathematics students after their first year at 
university. The present somewhat lengthy   
but less elementary and  less detailed introductory section explains
our probabilistic motivation for undertaking the exercise,
but it is not logically necessary for understanding the 
remaining two sections.
 
Our motivation stems from moment bounds in probability. 
Referring to the appropriate sections of 
\cite{ChowTeicher},  \cite{delaPenaGine} and \cite{Petrov}
for good general introductions to this topic,
we restrict our attention here to the  somewhat specialized
subtopic of finding the {\em optimal} lower and upper bounds
for the $r$th absolute moment $\E |S_n|^r_{}$, 
with $r\in[1,\infty[$,  of a sum $S_n=\sum_{i=1}^n X_i$
of real-valued random variables, given $n\in\N$, given only the  individual  moments
$\E |X_1|^r_{},\ldots,\E|X_n|^r_{}$ {\em of the same order}, 
and given some 
structural assumption on the process $(X_1,\ldots,X_n)$. 
Structural assumptions of interest include the following  ones, 
ordered here according to increasing generality,
\[
 \text{IIDC:}&&\hspace{-2ex} \text{$X_1,\ldots,X_n$ 
  are independent, identically distributed, and centred} \\
 \text{IC:}&&\hspace{-2ex} \text{$X_1,\ldots,X_n$ 
  are independent and centred}\\
 \text{MG:}&&\hspace{-2ex} \text{$(S_1,\ldots, S_n) $ is a martingale} 
\]
with ``centred'' meaning $\E X_i = 0$  for each $i$.

For example, under the assumption IIDC,
bounding  $\E |S_n|^r_{}$ in terms of 
$\E |X_1|^r_{}= \ldots =\E |X_n|^r_{}$ is equivalent to 
bounding $ \E |\frac 1n \sum_{i=1}^n Y_i - \E Y_1|^r_{}$ 
for not necessarily centred IID random variables $Y_i$
in terms of $\E |Y_1-\E Y_1|^r_{}$, 
and this is of interest as one reasonable way of assessing the quality of 
$\frac 1n \sum_{i=1}^n Y_i$ as a statistical estimator for $\E Y_1$.  

The best known and most convenient optimal bounds of the kind considered here
refer to the case of $r=2$, namely
\la
   \sum_{i=1}^n \E|X_i|^2_{} &\le& \E |S_n|^2_{} \,\,\le\,\,  \sum_{i=1}^n  \E|X_i|^2_{} 
\al
valid and then  trivially optimal under each of IIDC, IC and MG. 
Further optimal bounds are rare. 
Restricting our attention to $r\in {[1,2[}$
here (for brevity, and since for $r>2$ the 
more effective inequalities of Rosenthal
\cite[pp.~59, 83]{Petrov}
involving  moments of orders $2$ and $r$ are available)
and under the structures considered,
the only ones known to me are the trivial  upper bound   
\la
  \E |S_n| &\le& \sum_{i=1}^n \E|X_i|
\al
valid (by the usual triangle inequality for expectations, without
using any structural assumption) and optimal under each of IID, IC, MG
(as can be seen by considering independent random variables $X_i$ 
with $\P(X_i = - a_i/p)=   \P(X_i = a_i/p) = p/2$ and $\P(X_i=0)=1-p$,
and letting $p\rightarrow 0$),  the lower bound of Hornich type 
\la        \label{Mattner.Hornich}
  \E |S_n|    &\ge&  c^{}_n\,\E|X_1|  \quad \text{ under IIDC}
\al 
where, writing  $\mathrm{b}(n,p;k):=\binom{n}{k}p^k(1-p)^{n-k}$
for  binomial probabilities, we have  put
$c_n := n\, \mathrm{b}(n,\frac{\lfloor n/2 \rfloor}{n}; \lfloor n/2\rfloor )$,
which is $   \sim$ $ \sqrt{\frac{2n}{\pi}}$ for $n\rightarrow \infty$,
and finally
\la \qquad\label{Mattner.Roesler}
  \E |S_n| &\ge&    \max\, \Big\{\E |X_k| -\sum_{i=1}^{k-1} \E |X_i|\Big\}_{k=1}^n \cup 
\Big\{ \frac {\E |X_k|}2\Big\}_{k=3}^n    \quad \text{ under MG}
\al
where $\{b_k\}_{k=n_1}^{n_2}$ denotes the possibly empty set
$\{b_k \,:\, k\in\N, \, n_1\le k \le n_2\}$.
See \cite{Mattner} for \eqref{Mattner.Hornich}
and   \cite{MattnerRoesler} for \eqref{Mattner.Roesler}, and both  for 
more detailed discussions and appropriate references.    

Generally speaking  one should expect that proving   optimal bounds as considered
here, or equivalently solving associated  extremal problems subject to constraints,
should  be the most difficult under IC and easiest under MG,
since $\E |S_n|^r_{}$  is  under MG a {\em linear} functional of the law
of $(S_1,\ldots,S_n)$ which varies in a {\em convex} set,
while under IC it is an {\em $n$-linear}
functional of   the individual laws
$P_1,\ldots,P_n$, namely an integral with respect to the product measure
$P_1\otimes\ldots\otimes P_n$, 
and finally under IIDC it is an integral with respect to a ``power measure'' $P^{\otimes n}$.
It is therefore surprising  and certainly an instance of luck 
that \eqref{Mattner.Hornich} has been proved. 
The paper \cite{MattnerRoesler} resulted from an attempt to find an  analogue
of \eqref{Mattner.Hornich}  under IC. Being unsuccessful in that respect, the
authors  of \cite{MattnerRoesler}  of course 
modified the assumptions and  finally  came up with  \eqref{Mattner.Roesler}
and with a similarly strange  result for submartingales. 
Oddly enough, during the many  pizza evenings  it took them, they never
thought of first solving their moment problems under the simplest of all structural  
assumptions, namely
\[
 \text{N:}&&\hspace{-2ex} \text{No assumption, i.e., $X_1,\ldots,X_n$ are
 arbitrary random variables}
\]
Returning to general $r\in[1,\infty[$ now and considering 
$(\E |S_n|^r)^{1/r}_{}$ instead of $\E |S_n|^r_{}$, 
this amounts to writing down  the best upper and lower bounds for 
$\mathrm{L}^r_{}$~norms of  sums given the norms of the summands,  and 
the nonsurprising result, see \eqref{TriangleIneq2} below, turns out to be the
same more generally on every normed space of dimension at least two. The proof
of the optimality of the lower bound is a  not completely trivial and 
nice exercise in analysis, as we try to show below.

\section{An elementary analytic introduction} \label{Sec.Old.Intro}
Let $(E,\|\cdot\|)$ be a normed vector space over the real numbers $\R$. 
Then, for $x,y\in E$, the triangle inequalities 
\la         \label{TriangleIneq1}
  \big|\, \|x\|-\|y\|\, \big| &\le& \|x+y\| \,\,\le \,\,\|x\|+\|y\|
\al 
provide optimal lower and upper bounds for $\|x+y\|$ in terms
of  $\|x\|$ and $\|y\|$. Namely, ignoring the trivial case where 
$E$ is zero-dimensional and given any positive numbers $a,b\in[0,\infty[$,
we get  $x,y\in E$ with $\|x\| = a$, $\|y\|=b$,  
and  $ \|x+y\| = |a-b|$ or $ \|x+y\| = a+b$,  by choosing
some $u\in E$ with $\|u\| = 1$  and then putting 
$x:= au $ and $y := -bu$ or $y:= bu$, respectively.  

How about more than two vectors? For $n\ge 2$ and $x_1,\ldots,x_n\in E$,
the inequalities \eqref{TriangleIneq1} generalize  quite obviously to 
\la           \label{TriangleIneq2}
   \max_{k=1}^n\Big( \|x_k\| - \sum_{i\neq k} \|x_i\| \Big)^{}_+
  &\le& \Big\|\sum_{i=1}^n x_i\Big\| \,\,\le \,\, \sum_{i=1}^n \|x_i\|
\al
with $c_+:=\max\{c,0\}$ for   $c\in\R$,
and with ``$i\neq k$'' of course meaning 
``$i\in\{1,\ldots,n\}\setminus\{k\}$''.
Here the left hand inequality in  \eqref{TriangleIneq2}
follows from observing 
\[
 \Big\|\sum_{i=1}^n x_i\Big\| &=&
  \Big\|x_k + \sum_{i\neq k} x_i\Big\|
 \,\,\ge \,\, \|x_k\| -  \Big\|\sum_{i\neq k} x_i\Big\|
 \,\,\ge\,\,  \|x_k\| -  \sum_{i\neq k} \|x_i\|
\] 
for each $k\in\{1,\ldots,n\}$.
But can such a simple-minded approach yield  optimal bounds
for $\| \sum_{i=1}^n x_i\|$ given $\|x_1\|,\ldots,\|x_n\|$ also if $n\ge 3$? 
For the upper bound in \eqref{TriangleIneq2}, the answer 
is obviously ``yes'',
by choosing the $x_i$ to be appropriate positive multiples of some nonzero 
$u\in E$, as above for the special case of $n=2$.
For the lower bound, however, the answer is clearly ``no'' when $E$ is 
one-dimensional: Then, for example, if $n=3$ and  
$\|x_1\| = \|x_2\| = \|x_3\| = 1$, we 
necessarily have $\|x_1+x_2+x_3\| \in\{1,3\}$, 
wheras \eqref{TriangleIneq2}
only yields the trivial lower bound $\|x_1+x_2+x_3\| \ge 0$. 
The purpose of the remainder of this note is to show
that in higher dimensions the inequalities \eqref{TriangleIneq2} are indeed 
optimal:

\medskip 
\textsc{Proposition.}
\textsl{Let  $(E,\|\cdot\|)$ be a normed vector space over $\R$
with dimension at least $2$. Then, for every choice of 
$a,b\in [0,\infty[$, we have 
\la          \label{SumsAreShells1}
&&\big\{x+y\,:\, x,y\in E,\, \|x\|=a,\, \|y\|=b\big\}  \\
&& \qquad \qquad\qquad \qquad \qquad\qquad \,\,=\,\,   
\big\{z\in E\,:\, |a-b|\le \|z\| \le a+b\big\}  \nonumber
\al
and, more generally, for every integer $n\ge 2$ and 
every  $a \in {[0,\infty[}_{}^n$,   
\la                  \label{SumsAreShells2}
&& \big\{\sum_{i=1}^n x_i\,:\, x_i\in E \text{ and }\|x_i\|=a_i
 \text{ for each }i \big\}  \\
&& \qquad \qquad \qquad\qquad  =\,\,   
 \big\{ z\in E\,:\,  
 \max_{k=1}^n(a_k-\sum_{i\neq k} a_i)^{}_+ \,\le\, \|z\| \,\le\,
 \sum_{i=1}^n a_i \big\} 
 \nonumber 
\al}

Writing 
\la          \label{Def Sr}
  S_r &:=& \{x\in E\,:\, \|x\| = r\}
\al
for the possibly degenerate norm  sphere of radius $r\in[0,\infty[$ 
around $0\in E$,
identity~\eqref{SumsAreShells1} says that the sum of two 
norm spheres is  the closed norm shell around 0
with inner and outer  radii obtained from the 
inequalities~\eqref{TriangleIneq1}, and identity~\eqref{SumsAreShells2}
is the corresponding statement for the sum of $n$ spheres,
with the radii of the resulting shell 
obtained  from~\eqref{TriangleIneq2}.

Proving the proposition turns out to be 
a little exercise in elementary normed vector space theory
rather exactly at the level of \cite[section 5.1]{Dieudonne},
and accordingly we have written the present sections~\ref{Sec.Old.Intro} and 
\ref{Proofs}  with a potential student of that reference  in mind.

\section{Proofs}\label{Proofs}

\begin{lem}\label{LemmaSrConnected}
Let $(E,\|\cdot\|)$ be a normed vector space over $\R$ with dimension at 
least $2$. Then, for each $r\in [0,\infty[$, the set $S_r$ from \eqref{Def Sr} 
is connected.
\end{lem}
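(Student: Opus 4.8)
The plan is to prove the stronger statement that $S_r$ is \emph{path-connected}, from which connectedness is immediate (the continuous image of the connected interval $[0,1]$ is connected, and a union of connected sets sharing a common point is connected). The case $r=0$ is trivial since then $S_r=\{0\}$, so I assume $r>0$, so that in particular every point of $S_r$ is nonzero.

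The basic building block is ``radial projection of a line segment''. If $x,y\in S_r$ are linearly independent, then the segment $\{(1-t)x+ty:t\in[0,1]\}$ never passes through $0$, so $\gamma(t):=r\,\frac{(1-t)x+ty}{\|(1-t)x+ty\|}$, for $t\in[0,1]$, is well defined; it is continuous, since $\|\cdot\|$ is continuous (indeed $1$-Lipschitz, by \eqref{TriangleIneq1}) and $v\mapsto v/\|v\|$ is continuous on $E\setminus\{0\}$, and it satisfies $\gamma(t)\in S_r$ for every $t$, with $\gamma(0)=x$ and $\gamma(1)=y$. Hence any two linearly independent points of $S_r$ lie on a common path in $S_r$.

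It remains to join linearly \emph{dependent} $x,y\in S_r$; since $\|x\|=\|y\|=r>0$, this forces $y=x$ (nothing to do) or $y=-x$. This is the one place where the hypothesis $\dim E\ge2$ enters: choose $w\in E$ not lying on the line $\R x$ and rescale it so that $\|w\|=r$; then $w$ is linearly independent of both $x$ and $-x$, and concatenating a path in $S_r$ from $x$ to $w$ with one from $w$ to $-x$ (both furnished by the previous paragraph) gives a path in $S_r$ from $x$ to $-x$. Therefore $S_r$ is path-connected, hence connected.

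I do not expect a serious obstacle here; the only point needing a moment's thought is that the naive segment-projection degenerates exactly for antipodal pairs, and that detouring through a third, transverse point repairs it — which is in precise agreement with the fact that $S_r$ is a disconnected two-point set when $\dim E=1$ and $r>0$, so that the hypothesis cannot be dropped. (Equivalently, one could phrase the whole argument as: $S_r$ is the image of $E\setminus\{0\}$ under the continuous map $v\mapsto rv/\|v\|$, and $E\setminus\{0\}$ is path-connected when $\dim E\ge2$ by the same two observations.)
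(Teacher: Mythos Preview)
Your proof is correct and follows essentially the same route as the paper: establish path-connectedness via the radial projection $v\mapsto rv/\|v\|$, using $\dim E\ge 2$ to route around the origin. The paper phrases it as first showing $E\setminus\{0\}$ is path-connected (via a one-parameter family of paths from $x$ to $y$, at most one of which can pass through $0$) and then pushing forward, which is exactly the variant you sketch in your closing parenthetical; your case split on ``linearly independent vs.\ antipodal'' is a slightly more hands-on way of achieving the same avoidance.
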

\begin{proof}
Let $x,y\in E$ with $x\neq y$. Then we can choose    $z\in E$ linearly
independent of $y-x$, and observe that then
the continuous paths
\[
 [0,1]\,\,\ni\,\,t &\mapsto& x+ t\,(y-x) \,+\, 
 \min\{t,1-t\}\,\alpha \,z
 \,\, =:\,\, \gamma^{}_\alpha(t)
\]
with $\alpha \in \R$ have trajectories  disjoint except for their endpoints
$x$ and $y$,  that is, $\gamma^{}_\alpha({]0,1[}) \neq \gamma^{}_\beta({]0,1[})$
for $\alpha\neq \beta$.  Hence, if also $x,y\neq 0$, then
at most one  $\gamma_\alpha^{}$ passes through $0$. 
This shows that $E\setminus \{0\}$ is connected.  Hence so is its image  
$S_r$ under  the continuous map  $E\setminus \{0\} \ni x \mapsto rx/\|x\| $.
\end{proof}

\textsc{Remark.} The above proof yields in fact that  $E\setminus  A$ is 
connected whenever $A$ is at most countable. See 
\cite[exercise 5 in section 5.1]{Dieudonne} for further such results.

\begin{lem}\label{LemmaUnionOfIntervals} 
Let $I\subset \R$ be a nonempty compact interval and 
let $f,g:I\rightarrow \R$ be continuous functions with $f\le g$.
Then $\bigcup_{x\in I} [f(x),g(x)] = [\min f,\max g]$.
\end{lem}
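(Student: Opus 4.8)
The plan is to prove the two inclusions separately, exploiting that $[\min f,\max g]$ is a closed interval and that the union on the left is the image of a connected set under a suitable continuous map. First I would note that $\bigcup_{x\in I}[f(x),g(x)]$ is clearly contained in $[\min f,\max g]$: every element of a summand interval $[f(x),g(x)]$ lies between $\min f\le f(x)$ and $g(x)\le\max g$. Both extrema exist since $I$ is compact and $f,g$ continuous. This direction is immediate and needs no real work.

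For the reverse inclusion, the natural approach is to reduce to the intermediate value theorem on a connected domain. Consider the set $\Delta:=\{(x,t):x\in I,\ f(x)\le t\le g(x)\}\subset\R^2$; since $f\le g$ are continuous on the interval $I$, $\Delta$ is connected (indeed path-connected: given two points in $\Delta$, move the first coordinate linearly from one to the other staying inside $\Delta$ because the ``slab'' over each $x$ is an interval, then adjust the second coordinate vertically). Then the projection $\pi:(x,t)\mapsto t$ is continuous, so $\pi(\Delta)=\bigcup_{x\in I}[f(x),g(x)]$ is a connected subset of $\R$, hence an interval. It remains to check that this interval contains both $\min f$ and $\max g$: if $f$ attains its minimum at $x_0$ then $\min f=f(x_0)\in[f(x_0),g(x_0)]$, and similarly $\max g=g(x_1)\in[f(x_1),g(x_1)]$ where $g$ attains its maximum. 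A connected subset of $\R$ containing $\min f$ and $\max g$ must contain the whole segment $[\min f,\max g]$, giving the reverse inclusion.

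An alternative, slightly more hands-on route avoids $\R^2$ entirely: fix any target value $c\in[\min f,\max g]$ and produce $x\in I$ with $f(x)\le c\le g(x)$. Pick $x_0,x_1\in I$ with $f(x_0)=\min f\le c$ and $g(x_1)=\max g\ge c$; WLOG $x_0\le x_1$. On the subinterval $[x_0,x_1]$ consider $h:=f-c$ and $k:=g-c$: we have $h(x_0)\le 0$ and $k(x_1)\ge 0$. If $h(x)\le 0\le k(x)$ for some $x$ we are done, so suppose not; then for each $x$ either $h(x)>0$ or $k(x)<0$, and since $h\le k$ these two open conditions are disjoint and cover $[x_0,x_1]$, contradicting connectedness — unless one of them is empty, but $h(x_0)\le 0$ rules out ``$h>0$ everywhere'' and $k(x_1)\ge 0$ rules out ``$k<0$ everywhere''. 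Either formulation works; I would probably present the first, since the connectedness-of-$\Delta$ viewpoint is cleanest and matches the flavour of Lemma~\ref{LemmaSrConnected}.

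The only mild obstacle is making the connectedness of $\Delta$ (or the covering argument) fully rigorous without hand-waving — specifically, spelling out that the fibres of $\Delta$ over $I$ are intervals so that one can slide continuously within $\Delta$, or equivalently that the sets $\{h>0\}$ and $\{k<0\}$ are relatively open and disjoint. This is entirely routine at the level of \cite[section~5.1]{Dieudonne}, so the proof should be short.
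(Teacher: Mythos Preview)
Your approach is correct and close in spirit to the paper's. The paper also proves the nontrivial inclusion by showing the union is the continuous image of an interval and then invoking the intermediate value theorem, but it does so more concretely: rather than arguing abstractly that the region $\Delta\subset\R^2$ is connected and projecting, it writes down an explicit path $z:[0,1]\to A$ that first runs vertically in the fibre over a minimizer $x$ of $f$ (from $f(x)$ up to $g(x)$) and then follows the graph of $g$ from $x$ to a maximizer $y$ of $g$. This is precisely the kind of path one needs to make your path-connectedness sketch for $\Delta$ rigorous; note that your phrase ``move the first coordinate linearly \ldots\ then adjust the second coordinate vertically'' is not quite right as stated, since keeping the second coordinate fixed while the first moves can exit $\Delta$ --- one must let the second coordinate vary along the way (e.g.\ along the graph of $g$, as the paper does, or via the parametrization $(x,s)\mapsto(x,(1-s)f(x)+sg(x))$ from $I\times[0,1]$). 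Your second, covering-based alternative is fully correct and is a genuinely different route; it trades the explicit path for a disjoint-open-cover contradiction on $[x_0,x_1]$, which is arguably slicker once written out, though less visual.
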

\begin{proof}  
Let $A$ and $B$ denote the two sets claimed to be equal.
Trivially, $A\subset B$. Let $x,y\in I$ with $f(x)=\min f$ 
and $g(y)=\max g$. Then
\[
 z(t) &:=& \left\{
 \begin{array}{ll}
 f(x)+2t\big(g(x)-f(x) \big) &\quad
  \text{ for }\,\,t\in[0,\textstyle{\frac 12}]\\
  g\big(x+2(t-\textstyle{\frac 12})(y-x) \big) &\quad
  \text{ for }\,\,t\in {]\textstyle{\frac 12},1]} 
 \end{array}\right.
\]   
defines a continuous function $z:[0,1]\rightarrow A$, with 
$z(0)= \min f$ and $z(1)=\max g$. Hence, by the 
intermediate value theorem, we have $B\subset A$.   
\end{proof}

\begin{proof}[Proof of the Proposition]
Let $A$ and $B$ denote the two sides of \eqref{SumsAreShells1}. 
We have $A \subset B$ by \eqref{TriangleIneq1}.  
So let $z_0 \in B$. 
Using the notation from \eqref{Def Sr}, we consider the continuous
function  
\[
 S_a \,\,\ni\,\, x &\mapsto&  \|z_0-x\|  \,\,=:\,\, h(x)
\]
If $z_0 \neq 0$, we put $u:=z_0/\|z_0\|$,
otherwise we choose an arbitrary $u\in E$ with $\|u\|=1$.
Then $x_1 := a u$ and $x_2 := -au$ belong to $S_a$, and the 
assumption $|a-b|\le  \|z_0\| \le a+b$ yields
$h(x_1) = \|z_0 -au\| = |\|z_0\|-a|  \le b$
and $h(x_2) = \| z_0+au \|  = \|z_0 \| +a \ge b$.
By Lemma~\ref{LemmaSrConnected} and by the continuity of $h$, 
the image  $h(S_a)$
is connected, so there exists an $x\in S_a$ with
$h(x)=b$, that is, $y:= z_0-x  \in S_b$. Thus
$z_0 = x+y \in A$, 
and \eqref{SumsAreShells1} is proved.

If $n=2$, then \eqref{SumsAreShells2} is just
\eqref{SumsAreShells1} with $a_1,a_2$ in place of $a,b$.
So assume  that  $n\ge 2$, $a_1,\ldots,a_{n+1}\in[0,\infty[$,
and that \eqref{SumsAreShells2} holds. 
To prepare for an application of  Lemma~\ref{LemmaUnionOfIntervals},
we let 
$\alpha:= \max_{k=1}^n (a_k-\sum_{i\in\{1,\ldots,n\}\setminus\{k\}}a_i)^{}_+$,
$\beta := \sum_{i=1}^n a_i$, and  
$f(r):=|r-a_{n+1}| $ and $g(r):= r+a_{n+1}$
for $r \in [\alpha, \beta]$, and observe that 
\[
 \min f &=& \left\{\begin{array}{ll} 
  0 &\text{ if }\alpha\le a_{n+1} \le \beta \\
  \alpha-a_{n+1} &\text{ if } a_{n+1} \le \alpha \\
  a_{n+1} - \beta & \text{ if }  a_{n+1} \ge \beta
 \end{array} \right\}
 \,\,=\,\, \max_{k=1}^{n+1} 
   \big(a_k-\sum_{i\in\{1,\ldots,n+1\}\setminus \{k\}}a_i\big)^{}_+
\]
and $\max g = \sum_{i=1}^{n+1} a_i$. 
Using now the  notations $S_a+S_b$
and $\sum_{i=1}^n S_{a_i}$ for the left hand sides of 
\eqref{SumsAreShells1} and \eqref{SumsAreShells2},
we may write the inductive hypothesis \eqref{SumsAreShells2} as
$\sum_{i=1}^n S_{a_i} =\bigcup_{r\in[\alpha,\beta]} S_r$ and obtain
\[
 \sum_{i=1}^{n+1} S_{a_i} 
  &=& \bigcup_{r\in[\alpha,\beta]} S_r  \,\,+\,\, S_{a^{}_{n+1}} \\
  &=& \bigcup_{r\in[\alpha,\beta]}\big(\, S_r  \,+\, S_{a^{}_{n+1}}\,\big) \\
  &=& \bigcup_{r\in[\alpha,\beta]} \bigcup_{\varrho\in [f(r),g(r)]} S_\varrho
 \,\,\qquad[\text{by  \eqref{SumsAreShells1}}]\\
  &=&  \bigcup_{\varrho\in[\min f,\max g]}S_\varrho
  \qquad \qquad[\text{by Lemma~\ref{LemmaUnionOfIntervals}}]
\]
and the last set equals the right hand side of \eqref{SumsAreShells2}
with $n+1$ in place of $n$.
\end{proof}


\end{document}